\documentclass[11pt,reqno]{amsart}
\RequirePackage{color}
\usepackage[T1]{fontenc}
\usepackage[utf8]{inputenc}
\usepackage[all]{xy}
\usepackage{graphicx}
\usepackage{lmodern}
\usepackage{amsmath,amssymb,amsthm,mathtools}
\usepackage[margin=1.05in]{geometry}
\usepackage[expansion=false]{microtype}
\usepackage{enumitem}
\usepackage{xcolor}
\usepackage[hidelinks]{hyperref}

\newtheorem{theorem}{Theorem}[section]
\newtheorem{proposition}[theorem]{Proposition}
\newtheorem{lemma}[theorem]{Lemma}

\newtheorem*{maintheorem}{Theorem A}
\newtheorem*{maintheorem2}{Theorem B}
\newtheorem*{maincorollary}{Corollary C}
\newtheorem*{maincorollary2}{Corollary D}
\newtheorem*{maincorollary3}{Corollary E}
\newtheorem*{maincorollary4}{Corollary F}
\newtheorem*{remarkk}{Remark}
\theoremstyle{definition}
\newtheorem{definition}[theorem]{Definition}

\newtheorem{example}[theorem]{Example}
\theoremstyle{remark}

\DeclareMathOperator{\Hom}{Hom}
\DeclareMathOperator{\End}{End}
\DeclareMathOperator{\Ext}{Ext}
\DeclareMathOperator{\Epi}{Epi}
\DeclareMathOperator{\coker}{coker}
\DeclareMathOperator{\im}{im}
\DeclareMathOperator{\rk}{rank}

\DeclareMathOperator{\rep}{rep}
\DeclareMathOperator{\Irr}{Irr}
\DeclareMathOperator{\GL}{GL}
\DeclareMathOperator{\proj}{proj}
\newcommand{\modA}{\operatorname{mod} A}
\DeclareMathOperator*{\modu}{mod}

\newcommand{\kk}{k}
\newcommand{\dd}{\mathbf d}
\newcommand{\bb}{\mathbf b}
\newcommand{\D}{D}

\newcommand{\calI}{\mathcal I}
\newcommand{\calD}{\mathcal D}

\numberwithin{equation}{section}
\setlist[enumerate]{label=\textup{(\roman*)},leftmargin=2.2em,itemsep=3pt}
\title[The $2$nd b-BT Conjecture for $E$-infinite algebras]{A Proof of the Second brick-Brauer-Thrall Conjecture for $E$-infinite algebras}
\author[Kaveh Mousavand, Charles Paquette]{Kaveh Mousavand, Charles Paquette} 
\address{Unit of Representation Theory and Algebraic Combinatorics, Okinawa Institute of Science and Technology, Okinawa, Japan}
\email{mousavand.kaveh@gmail.com}
\address{Department of Mathematics and Computer Science, Royal Military College of Canada, Kingston ON, Canada}
\email{charles.paquette.math@gmail.com}
\subjclass [2020]{16P10, 16D80, 16G60}
\keywords{brick-finiteness, E-finiteness, brick-continuity, brick components, generically $\tau$-regular components, $\tau$-tilting fan, brick-Brauer--Thrall conjectures}

\begin{document}

\begin{abstract}

For any finite-dimensional algebra $A$ over an algebraically closed field, we prove that if $A$ is $E$-infinite, then the Second brick-Brauer--Thrall Conjecture holds for $A$. In fact, we show that if there is a rational ray outside the $\tau$-tilting fan of $A$, then $A$ admits an infinite family of $\theta$-stable bricks of dimension $d$, for some positive integer $d$ and weight $\theta$. 
 To show our main result, from any $\tau$-regular component of $A$ with no dense orbit, we obtain a brick component that also has no dense orbit and whose points in general position are $\theta$-stable modules for a given weight~$\theta$. As an important consequence, we conclude that $A$ is stably-discrete (also called multiplicity-free) if and only if $A$ is $E$-finite. Moreover, we prove that Demonet’s (lattice point) Conjecture~implies the stable Second brick-Brauer--Thrall Conjecture. For $E$-infinite algebras, our results settle several open conjectures. Together with our earlier work  and recent developments, this leads to important reductions in the study of some challenging open problems.
\end{abstract}

\maketitle

\tableofcontents

\section{Introduction and main results}\label{Sec: Introduction}

Throughout, $A$ is a finite-dimensional basic associative unital
$\kk$-algebra, where $\kk$ is algebraically closed. Hence, we can assume $A\simeq kQ/I$, for a finite quiver $Q$ and an admissible ideal $I$.
We let $n$ denote the rank of $A$, which is the same as the number of vertices of $Q$. By $\modu A$ we denote the category of finite-dimensional left $A$-modules. Unless specified otherwise, by a module we always mean an object in $\modu A$, considered up to isomorphism. Moreover, for a module $M$, by $\underline{\dim}M$ we denote the dimension vector of $M$, an integer vector belonging to $\mathbb{Z}^n_{\geq 0}$. Let $\D=\Hom_\kk(-,\kk)$ be the standard duality, and by $\tau$ we denote the Auslander--Reiten translation in $\modu A$. 

\medskip

Recall that a nonzero module $B$ is a \emph{brick} provided $\End_A(B)$ is a division $\kk$-algebra. Since $\kk$ is algebraically closed, $B$ in $\modu A$ is a brick if and only if $\End_A(B)\cong\kk$. Henceforth, we identify $K_0(\proj A)$ with the lattice of integral weights (also known as $g$-vectors) on $K_0(\modA)$ using the standard pairing $\langle[P],\underline{\dim}M\rangle=\dim\Hom_A(P,M)$. Consequently, for a weight $\theta \in K_0(\proj A)$, by $\theta(M)$ we denote the value of $\theta$ on $\underline{\dim}M$.
Following \cite{Ki}, a nonzero module $B$ is said to be \emph{$\theta$-stable} if $\theta(B)=0$ and $\theta(L)<0$ for every
nonzero proper submodule $L\subsetneq B$. Replacing the above strict inequality by $\theta(L)\leq0$ yields $\theta$-semistability. Stable modules are necessarily bricks (but the converse is not necessarily true), and it is known that stability is an open
condition in a fixed representation variety; see \cite{Ki} for more details.

\medskip

For a dimension vector $\dd=(d_1,\ldots,d_n)$ in $\mathbb{Z}^n_{\geq 0}$, let $\rep(A,\dd)$ be
the variety that parametrizes representations of $A$ of dimension vector $\dd$.
The group $G_\dd=\prod_{i=1}^n\GL_{d_i}(\kk)$ acts on this affine space via conjugation, so that its orbits correspond to isomorphism classes of modules in $\modu A$. 
Let $\Irr(A,\dd)$ denote the set of irreducible components of $\rep(A,\dd)$. Moreover, $\Irr(A)$ denotes the union of all $\Irr(A,\dd)$, where $\dd$ runs through $\mathbb{Z}^n_{\geq 0}$. For $Z\in \Irr(A,\dd)$, define
\[
 c(Z):=\dim Z-\max_{X\in Z}\dim(G_\dd\cdot X),
 \qquad \text{and} \qquad
 h(Z):=\operatorname{min}_{X\in Z}\dim\Hom_A(X,\tau X),
\]
where we note that these two values are attained on some dense open subset of $Z$, hence they are called generic values.  The component $Z$ is said to be \emph{generically $\tau$-regular}, or simply
\emph{$\tau$-regular}, if $c(Z)=h(Z)$.  We remark that $\tau$-regular components have also appeared under different names in the literature, but we adopt this terminology for consistency with the more recent studies on these components; for more details, see \cite{BS} and the references therein. Finally, a component $Z \in \Irr(A)$ is a \emph{brick component} if $Z$ contains a brick. In this case, $Z$ has a non-empty open subset consisting of bricks.

\medskip

To state our main results, we first recall the following definition; for more details, see \cite{MP2}.

\begin{definition}\label{Main definition}
Let $A$ be an algebra. Then,
\begin{enumerate}
    \item $A$ is \emph{brick-finite} if there are only finitely many (isomorphism classes of) bricks in $\modu A$.
    \item $A$ is \emph{brick-continuous} if, for some positive integer $d$, there are infinitely many (pairwise non-isomorphic) bricks of dimension $d$.
    \item $A$ is \emph{$E$-finite} if every $\tau$-regular
component $Z \in \Irr(A)$ satisfies $c(Z)=0$.
\end{enumerate}
\end{definition}

For an algebra $A$, we say that $A$ is brick-infinite (respectively, $E$-infinite) if $A$ is not brick-finite (respectively, not $E$-finite). Every E-infinite algebra is known to be brick-infinite \cite{AI}, and each brick-continuous algebra is evidently brick-infinite. Notably, the converse implications are still open; see \cite{MP2, MP3} and references therein. 
Observe that the definition of $E$-finiteness above is a convenient geometric formulation of the original definition; see \cite[Definition~7.5]{MP2}. Equivalently, $A$ is $E$-finite if the $\tau$-tilting fan of $A$ contains every rational vector in $K_0(\proj A)_\mathbb R$; see \cite{AI, De}. An element of $K_0(\proj A)$ that lies in the $\tau$-tilting fan of $A$ is called a \emph{rigid} $g$-vector.

\medskip

To motivate our results and put them into perspective, observe that, for an algebra $A$, being brick-continuous is an algebraic property of $A$, while being $E$-infinite can be seen as a geometric property of $A$. In the following, we study the implications between these conceptual algebraic and geometric properties. 
In particular, note that if $M$ is $\tau$-rigid (namely, if $\Hom_A(M,\tau M)=0$), then the closure of the $G_\dd$-orbit of $M$ is a $\tau$-regular component $Z\in \Irr(A)$ with $c(Z)=0$. Moreover, every $\tau$-regular component $Z\in \Irr(A)$ with $c(Z)=0$ is of this form. If $A$ is brick-finite, for each $\tau$-regular component $Z\in \Irr(A)$, we have $c(Z)=0$, and hence $A$ is $E$-finite. To the best of our knowledge, it remained unknown whether for an arbitrary algebra $A$, being $E$-infinite implies being brick-continuous; for more details, see \cite{MP1, MP2, MP3} and references therein.

\medskip

Below, for dimension vectors $\bb$ and $\dd$ in $\mathbb{Z}^n_{\geq 0}$, the order $\bb\leq\dd$ is considered componentwise.

\begin{maintheorem}\label{Thm: A}
Let $A$ be an $E$-infinite algebra. Then, there exist a dimension vector $\bb$ and an integral weight $\theta$ such that $A$ admits infinitely many $\theta$-stable bricks of dimension vector $\bb$. 
In particular, every $E$-infinite algebra is brick-continuous.
\end{maintheorem}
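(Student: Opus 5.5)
Since $A$ is $E$-infinite, we fix a $\tau$-regular component $Z\in\Irr(A,\dd)$ with $c(Z)=h(Z)>0$, so $Z$ has no dense orbit. The plan is to pass from $Z$ to a brick component that also has no dense orbit and whose general points are $\theta$-stable for a suitable weight $\theta$. Such a component yields the theorem at once: stability is open, so a dense open subset of the component consists of $\theta$-stable modules, and these are bricks. Since the component has no dense orbit, this open set meets infinitely many $G_\bb$-orbits, which gives infinitely many pairwise non-isomorphic $\theta$-stable bricks of dimension vector $\bb$.

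\textbf{Choosing the weight.} By the Derksen--Fei/Plamondon correspondence between $\tau$-regular components and generic presentations, $Z$ is the generic cokernel component of a generic presentation of $g$-vector $\theta_0:=g(Z)$. We set $\theta:=-\theta_0$, or the appropriately signed weight, so that a general $M\in Z$ satisfies $\theta(M)=0$ and is $\theta$-semistable. This should follow from the standard fact that $\Hom$ from the generic cokernel of a $g$-vector $\eta$ into a general $N$ of its own component vanishes in the $\tau$-regular setting. Equivalently, one uses King's criterion: $\theta(L)\le 0$ for submodules $L$, because the generic $\tau$-regularity gives $\langle\theta_0,\underline{\dim}\,M\rangle$ control through $\Hom(M,\tau M)$ and $E$-invariants. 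We may first replace $Z$ by a $\tau$-regular component whose $g$-vector is primitive and lies on a rational ray outside the $\tau$-tilting fan. This is possible by the equivalent fan description of $E$-infiniteness recalled above. We may also decompose $Z$ generically into indecomposable $\tau$-regular summands, à la Crawley-Boevey--Schröer, and keep one summand $Z'$ with $c(Z')>0$.

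\textbf{From semistable to stable.} The general point $M$ of $Z'$ is $\theta$-semistable. We take its generic $\theta$-stable (Jordan--Hölder) decomposition. The $\theta$-semistable locus is open in $Z'$, so the multiset of dimension vectors $\bb_1,\dots,\bb_r$ of the stable factors is generically constant. Each $\bb_i$ determines a family of $\theta$-stable bricks: take the closure of the locus of stable factors in $\rep(A,\bb_i)$, and choose an irreducible component $W_i$ containing it, which is a brick component with general point $\theta$-stable. If every $W_i$ had a dense orbit, then the general $M$ would be an iterated extension of finitely many fixed modules $S_1,\dots,S_r$. Such extensions form, up to isomorphism, a family whose orbit count we must compare with $c(Z')>0$. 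The key claim is that this forces $Z'$ to have a dense orbit, a contradiction. Hence some $W_i$ has no dense orbit, and we take $\bb=\bb_i$.

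\textbf{Main obstacle.} The hard step is the last claim: iterated extensions of fixed stable bricks can a priori form infinitely many isomorphism classes, as in a regular tube or the Kronecker case with a fixed simple. So "finitely many factors" does not immediately contradict $c(Z')>0$. To handle this, I would use $\tau$-regularity: the finitely many $S_i$ lie in the abelian wide subcategory of $\theta$-semistables, and since $\theta$ is in the interior of the relevant region, stable modules with dense orbit should be $\tau$-rigid-like within that category. I would then try to show that the generic $M$ has $\Hom(M,\tau M)$ controlled by $\sum\dim\Ext^1(S_i,S_j)$-type terms, and that $c(Z')=h(Z')$ forces an $S_i$ to move in a positive-dimensional family. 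Failing this, I would run a dimension count: the extension variety built from rigid orbits has dimension at most $\dim G\cdot M$ plus $\sum\dim\Ext^1$-contributions. Comparing this with $\dim Z'=\dim G\cdot M+h(Z')$ should show that the moduli count vanishes unless some factor varies.
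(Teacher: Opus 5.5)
There is a genuine gap: both steps your argument depends on fail.

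\emph{The choice of weight.} By the weight formula \eqref{eq:weightformula}, a general $M\in Z$ with $\sigma=g(Z)$ satisfies $\sigma(M)=\dim\End_A(M)-\dim\Hom_A(M,\tau M)$. This need not vanish, so $M$ need not be $\pm\sigma$-semistable. The vanishing statement you quote does not give $\sigma(M)=0$.

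Concretely, take the $3$-Kronecker quiver with arrows $1\to 2$ and $\dd=(1,1)$. The component has $c(Z)=h(Z)=2$. Its general point $M$ is an indecomposable brick with minimal presentation $P(2)^2\to P(1)\to M\to 0$. So $g(Z)=[P(1)]-2[P(2)]$ is primitive and non-rigid, and passing to generic summands changes nothing. Yet $\sigma(M)=1-2=-1$.

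\emph{The step you call the main obstacle.} This is the whole content of the theorem, and it is left unproved. As formulated, it cannot be proved without specific input on $\theta$. For $\theta=0$, every module is semistable and the stable factors are the simples. These lie in components with dense orbits, while $Z$ has no dense orbit. Your proposed remedies (``stable modules with dense orbit should be $\tau$-rigid-like'', a dimension count against $h(Z')$) are not carried out. The count would have to control exactly the infinite extension families you yourself point out. In effect you are trying to prove directly that $E$-infinite algebras are not stably-discrete, which the paper only obtains (Theorem B) as a consequence of Theorem A.

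\emph{The missing idea.} Extract stable bricks from the nonzero maps $X\to\tau X$ that exist because $h(Z)=c(Z)>0$, not from a Jordan--H\"older filtration of $X$. The paper argues as follows.
\begin{enumerate}
\item For $p$ in the open set $U$ of generic presentations, take a nonzero $f_p\colon X_p\to\tau X_p$ of minimal image dimension. Then $B_p=\im f_p$ is a brick.
\item Minimality gives $\Hom_A(X_p,L)=0$ for every nonzero proper submodule $L\subsetneq B_p$, and $\Hom_A(Q,\tau X_p)=0$ for every nonzero proper quotient $Q$.
\item The weight formula then yields $\sigma(L)<0<\sigma(Q)$. Hence $B_p$ is stable for $\vartheta=(\dim B_p)\sigma-\sigma(B_p)\ell$, a weight depending only on $\underline{\dim}\,B_p$ because $\sigma$ is constant on $U$.
\item To get infinitely many distinct $B_p$, Lemma~\ref{lem:incidence} shows, by a fiber-dimension argument on the incidence variety $\{(p,q)\mid qp=0\}$, the following. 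For a fixed $B$, the set of $p$ with $X_p\twoheadrightarrow B$ and $\Hom_A(B,\tau X_p)\neq 0$ lies in a proper closed subset of $H$. So finitely many bricks cannot account for all of $U$.
\item Pigeonholing over the finitely many $\bb\le\dd$ finishes the proof.
\end{enumerate}
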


Before we discuss some important consequences of the above theorem, let us remark that in Section \ref{Sec:proofs} we prove a stronger result on $E$-infinite algebras.  
More precisely, for a $\tau$-regular component $Z \in \Irr(A,\dd)$ with $c(Z)>0$, we obtain a brick component $W\subseteq\rep(A,\bb)$ with $c(W)>0$, for some nonzero dimension vector $\bb\leq\dd$, whose general element is stable for the integral weight $\theta$ computed explicitly; see Theorem \ref{Thm: from tau-regular comp. to brick comp}.

\medskip
Recall that an open problem on $g$-vectors, often referred to as \emph{Demonet's conjecture}, asserts that every brick-infinite algebra is $E$-infinite. This is an equivalent reformulation of a question posed by Demonet \cite[Question 3.49]{De} on vectors outside the $\tau$-tilting fan; see \cite[Conjecture~1.2]{MP1}. 
We also recall that the \emph{Stable Second brick-Brauer--Thrall conjecture} states that every brick-infinite algebra admits infinitely many (pairwise non-isomorphic) bricks of a fixed dimension vector that are stable for some common integral weight. This formulation appeared in \cite[Conjecture~5.2]{Pf}, which is a variation of \cite[Conjecture 5.4]{CKW} combined with \cite[Conjecture 6.0.1]{Mo1}, the latter nowadays is often called the \emph{Second brick-Brauer--Thrall (2nd bBT) conjecture}. More specifically, observe that the stable 2nd bBT strengthens the assertion of 2nd bBT by requiring the family of bricks to be stable for some integral weight. 
The above conjectures belong to a longer list of challenging open problems, collectively known as ``brick-Brauer--Thrall (bBT) Conjectures''; for more details and recent developments, see \cite{MP2, MP3, B-L} and references therein. 
As discussed below, our main results contribute to the study of several bBT Conjectures. 

\medskip

We now give a novel characterization of the multiplicity-free algebras introduced in \cite{CKW}, later called the stably-discrete algebras \cite{MP2}.
Recall that $A$ is \emph{multiplicity-free} if for any irreducible component $Z \in \Irr(A)$, the ring of semi-invariant polynomial functions on $Z$ is multiplicity-free; that is, each of its weight spaces is of dimension at most one. For an algebra $A$, being multiplicity-free is equivalent to the property that for each $\theta \in K_0(\proj A)$ and each irreducible component $Z \in \Irr(A)$, the moduli space $\mathcal{M}(A,Z)^{\theta-ss}$ of $S$-equivalences of $\theta$-semistable points is zero dimensional. The latter property justifies the terminology ``stably-discrete''. The reader is referred to \cite{Ki} for the definition of moduli spaces, and to \cite{CKW} for some basic properties of stably-discrete algebras. 
For a treatment of these algebras in the context of several open conjectures and related notions, see \cite[Section 6, 7]{MP2}.
\begin{maintheorem2}
    The algebra $A$ is stably-discrete (that is, multiplicity-free) if and only if it is $E$-finite.
\end{maintheorem2}

To state our next result, we recall that a (possibly infinite) set $\mathcal{S}$ of bricks is said to be a \emph{semibrick} if $\Hom_A(X,Y)=0=\Hom_A(Y,X)$, for any two distinct $X$ and $Y$ in $\mathcal{S}$; see \cite{As1}.
It is worth noting that non-isomorphic $\theta$-stable modules are necessarily Hom-orthogonal, because they are non-isomorphic simple objects in the category of $\theta$-semistable modules. 
 Hence, Theorem A immediately implies the following corollary and generalizes some recent results.

\begin{maincorollary}
If there is a rational ray outside the $\tau$-tilting fan of $A$, then there exists an infinite semibrick $\mathcal{S}$ in $\modu A$ such that every $X \in \mathcal{S}$ is of dimension $d$ and $\theta$-stable, for a fixed positive integer $d$ and stability condition $\theta \in K_0(\proj A)$. 
In particular, Demonet's (lattice point) conjecture implies the Stable Second brick-Brauer--Thrall Conjecture.
\end{maincorollary}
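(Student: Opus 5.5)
The plan is to reduce Corollary C to Theorem A and then use the fact that non-isomorphic $\theta$-stable modules are Hom-orthogonal.

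\emph{Step 1: a rational ray outside the fan gives $E$-infiniteness.} Suppose a rational ray $\mathbb{R}_{\geq0}\theta_0$ is not contained in the $\tau$-tilting fan of $A$. Then the fan does not contain every rational vector of $K_0(\proj A)_\mathbb{R}$. By the equivalent characterization of $E$-finiteness recalled after Definition~\ref{Main definition} (from \cite{AI, De}), $A$ is therefore not $E$-finite, i.e.\ $A$ is $E$-infinite.

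If one prefers to make the component explicit, proceed as follows. Scale $\theta_0$ to an integral weight and choose a multiple $m\theta_0$ that is the generic $g$-vector of a $\tau$-regular component $Z$. Such a component exists by the correspondence between integral weights and generically $\tau$-reduced components of presentations. This $Z$ cannot have $c(Z)=0$: otherwise its general element would be $\tau$-rigid, and $m\theta_0$ would be a rigid $g$-vector lying in the fan.

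\emph{Step 2: apply Theorem A.} This yields a dimension vector $\bb$, an integral weight $\theta$, and infinitely many pairwise non-isomorphic $\theta$-stable bricks of dimension vector $\bb$. Let $\mathcal S$ be this infinite family and put $d=\sum_i b_i$. Every member of $\mathcal S$ then has dimension $d$ and is $\theta$-stable.

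\emph{Step 3: $\mathcal S$ is a semibrick.} Take $X\not\cong Y$ in $\mathcal S$ and a nonzero map $f\colon X\to Y$.
\begin{itemize}
\item Since $\im f$ is a quotient of $X$, stability of $X$ gives $\theta(\im f)\ge 0$, with strict inequality if $f$ is not injective.
\item Since $\im f$ is a nonzero submodule of $Y$, stability of $Y$ gives $\theta(\im f)\le 0$, with strict inequality if $\im f\neq Y$.
\end{itemize}
Hence $\theta(\im f)=0$, so $f$ is injective and surjective, and $X\cong Y$, a contradiction. So $\Hom_A(X,Y)=0$ for distinct members, and $\mathcal S$ is an infinite semibrick.

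\emph{Step 4: the final assertion.} Demonet's conjecture says every brick-infinite algebra is $E$-infinite, which is equivalent to the existence of a rational vector outside the fan. Combined with the above, every brick-infinite algebra admits infinitely many $\theta$-stable bricks of a fixed dimension vector, which is exactly the Stable Second bBT conjecture.

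The only step needing care is Step 1, namely matching "rational ray outside the fan" with the definition of $E$-infiniteness used in the paper. Given the cited equivalence, this is immediate, and all the real work is contained in Theorem A.
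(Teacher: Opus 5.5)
Your proposal is correct and follows the paper's argument. The fan characterization of $E$-finiteness turns the rational ray into $E$-infiniteness, Theorem A supplies the infinite family of $\theta$-stable bricks of a fixed dimension vector, and your Step 3 checks by hand the paper's remark that non-isomorphic $\theta$-stable modules are Hom-orthogonal (as non-isomorphic simple objects of the abelian category of $\theta$-semistable modules). The only loose point is your optional explicit construction: a general presentation of weight $m\theta_0$ may split off a summand $P\to 0$, so $m\theta_0$ need not be the $g$-vector of the module component $Z$ itself. The conclusion $c(Z)>0$ still holds, because otherwise the general presentation would have a dense orbit and $(X,P)$ would be a $\tau$-rigid pair with $g$-vector $m\theta_0$.
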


We note that the final assertion of the above corollary generalizes some results from \cite{Pf} and \cite{MP1}, recently obtained for a special family of algebras and via different methods. More specifically, the implication was known only for $E$-tame algebras, but our new result holds in general.

\medskip

Our next result relates to another notion of tameness inspired by the property of the $\tau$-tilting fans; see \cite{PY}. In particular, recall that $A$ is \emph{$g$-tame} if the $\tau$-tilting fan of $A$ is dense in $K_0(\proj A)_\mathbb{R}$. The following corollary follows from the previous results, particularly because $E$-finiteness implies $g$-tameness. This improves some earlier studies in this direction; see \cite{MP2} and references therein.

\begin{maincorollary2}
If $A$ is not $g$-tame, the Stable Second brick-Brauer-Thrall Conjecture holds for~$A$. 
\end{maincorollary2}

As another corollary of our main theorem, we obtain a new result on an open problem, known as the \emph{rigid-brick conjecture}. As implied by \cite[Conjecture 6.0.1]{Mo1}, it is expected that if almost all bricks in $\modu A$ are rigid (i.e, if $\Ext^{1}_A(B,B)=0$, for all but possibly finitely many bricks $B$), then $A$ is brick-finite. This problem remains open in full generality, but we settle a weaker version of that in this work; for more details and some related results, see \cite{Mo2, MP1, MP4}.

\begin{maincorollary3}
If all but possibly finitely many bricks in $\modu A$ are rigid, then $A$ is $E$-finite. 
\end{maincorollary3}

\medskip

Thanks to the above results and our previous studies \cite{MP5}, for the purpose of proving the Second brick-Brauer-Thrall (2nd bBT) Conjecture, we can further reduce the setting of the main problem. 
To state the reduction, first recall that an algebra $A$ is said to be \emph{minimal brick-infinite} if $A$ is brick-infinite but every proper quotient of $A$ by an ideal is brick-finite. Every brick-infinite algebra admits a minimal brick-infinite quotient.

\begin{maincorollary4}[Reduction]\label{Cor: reduction}
To verify the Second brick-Brauer--Thrall Conjecture in full generality (that is, to show that every brick-infinite algebra is brick-continuous), it suffices to settle the statement for algebras $A$ that have the following properties:
\begin{enumerate}
    \item $A$ is minimal brick-infinite.
    \item $A$ is $g$-tame.
    \item Almost all bricks of $A$ are faithful.
\end{enumerate}
\end{maincorollary4}

In the preceding corollary, reduction to (i) simply follows from the fact that an infinite family of bricks of dimension $d$ over a quotient of $A$ remains such a family over $A$, reduction to (ii) follows from Corollary D, and reduction to (iii) follows from our earlier work \cite[Theorem 1.4]{MP5}. 

\medskip 

\begin{remarkk}
Provided an algebra $A$ satisfies the three conditions of Corollary F, in order to prove the 2nd bBT Conjecture, it suffices to construct an integral $g$-vector lying outside of its $\tau$-tilting fan. As shown in \cite[Theorem 1.3]{MP1}, one approach to construct such a $g$-vector is to find an infinite family of indecomposable $\tau$-rigid modules $\{X_i\}_{i\in \mathbb{N}}$ for which $\dim_k \End_A(X_i)$ grows subquadratically. This property is called the \emph{$\tau$-convergence} of $A$, and to completely settle the 2nd bBT Conjecture, it would be sufficient to show that all minimal brick-infinite $g$-tame algebras have this property. However, Example \ref{Ex: non-tau-convergence} shows that this is not always the case, if property (iii) of Corollary F is not assumed. 
 In light of the above reduction and some of our previous related results (see \cite[Corollary 1.4]{MP1} and \cite[Corollary 8.9]{MP2}), it is therefore an interesting objective to study which algebras that satisfy all properties of Corollary~F in fact have the $\tau$-convergence property, for which one then immediately verifies the 2nd bBT Conjecture.
\end{remarkk}

\section{Some preliminary materials}
\label{sec:presentations}

We first record precisely the standard presentation description of the (generically) $\tau$-regular components that will be used in the following. This is the generic-presentation parametrization due to Plamondon
\cite[Theorem~1.2]{Pl}.

\begin{proposition}%[Generic projective presentations]
\label{prop:presentations}
Let $Z\in \Irr(A,\dd)$ be a $\tau$-regular component.
There are projective modules $P_0, P_1$ and a nonempty open subset $U\subseteq \Hom_A(P_1,P_0)$
such that, for every $p\in U$, the sequence $
 P_1\xrightarrow{p}P_0\to X_p\to 0$ is a minimal projective presentation with $X_p=\coker p$ belonging to $Z$, and $\dim\Hom_A(X_p,\tau X_p)=c(Z)$.
\end{proposition}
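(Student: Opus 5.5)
The plan is to follow Plamondon's parametrization of components by generic projective presentations. For projectives $P_0,P_1$, the space $\Hom_A(P_1,P_0)$ is irreducible (it is an affine space). Consider the map $p\mapsto \coker p$. Once we fix $P_0$ and, for generic $p$, the dimension vector $\dd$ of $\coker p$, this map yields, locally on an open set, a morphism into $\rep(A,\dd)$ modulo $G_\dd$. Concretely, we would build the incidence variety of pairs $(p,X)$ with $X\in\rep(A,\dd)$ and $X\cong\coker p$, together with its two projections.

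\textbf{Choosing the presentation.} Since $Z$ is irreducible, a general $X\in Z$ has a minimal projective presentation $P_1\to P_0\to X\to 0$ with $P_0,P_1$ constant on a dense open subset of $Z$; this uses upper semicontinuity of $\dim\Hom_A(X,S_i)$ and $\dim\Ext^1_A(X,S_i)$. Fix these $P_0,P_1$.

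\textbf{Transferring genericity to $\Hom_A(P_1,P_0)$.} The set of $p$ that are radical maps (minimal presentations) with $\underline{\dim}\coker p=\dd$ is open; it is nonempty, since it contains the presentations of general points of $Z$. The image of $p\mapsto \coker p$ then lands in $\rep(A,\dd)$ up to orbits, and its closure is irreducible. The point is that this closure contains the general points of $Z$, hence equals $Z$ by maximality of components. Openness of the required conditions then yields a nonempty open $U$ of presentations whose cokernels lie in $Z$, and indeed in any prescribed dense open subset of $Z$.

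\textbf{Identifying $\dim\Hom_A(X_p,\tau X_p)$ with $c(Z)$.} The function $\dim\Hom_A(X,\tau X)$ equals $\dim\Hom_A(P_1,X)-\dim\Hom_A(P_0,X)+\dim\Hom_A(X,X)$, which follows from applying $\Hom_A(-,X)$ to the presentation together with the Auslander--Reiten formula via $\nu$. This expression is upper semicontinuous on $Z$ restricted to points with fixed minimal presentation, so its minimum $h(Z)$ is attained on a dense open set. The $\tau$-regularity hypothesis $c(Z)=h(Z)$ then gives $\dim\Hom_A(X_p,\tau X_p)=c(Z)$ after shrinking $U$.

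The main obstacle is the middle step. We must show that cokernels of generic $p$ land in $Z$ rather than in another component sharing the same presentation type. I would handle this by starting from general points of $Z$ and noting that the condition "having the fixed presentation type $(P_1,P_0)$" cuts out a dense open subset of $Z$. Irreducibility of $\Hom_A(P_1,P_0)$ then forces its generic cokernels into the single irreducible closure, namely $Z$. This is exactly the content of \cite[Theorem~1.2]{Pl}, which I would cite for the dimension bookkeeping.
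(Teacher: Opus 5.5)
Your overall plan matches the paper, which gives no argument of its own and simply cites \cite[Theorem~1.2]{Pl} (and \cite{BS}). Your first and third steps are fine; in particular, $\dim\Hom_A(X,\tau X)=\dim\Hom_A(P_1,X)-\dim\Hom_A(P_0,X)+\dim\End_A(X)$ is correct. However, the argument you give for the middle step, which carries the whole content of the proposition, does not work.

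\textbf{The openness claim.} You assert that the set of $p\in H=\Hom_A(P_1,P_0)$ which are minimal presentations with $\underline{\dim}\coker p=\dd$ is open. This is unjustified. Radical maps form the linear subspace $\operatorname{rad}_A(P_1,P_0)$, which is proper, hence closed and not open, whenever $P_0$ and $P_1$ share an indecomposable summand. Moreover, $\{p:\underline{\dim}\coker p=\dd\}$ is only locally closed unless $\dd$ happens to be the generic cokernel dimension vector. That this set contains a nonempty open subset of $H$ is essentially the statement you are trying to prove.

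\textbf{The ``main obstacle'' fix.} Your fix is also invalid. Irreducibility of $H$ only tells you that cokernels of general $p$ land in \emph{some} single component. Knowing that a dense open subset of $Z$ has presentation type $(P_1,P_0)$ says nothing about whether those presentations are dense in $H$; they may lie in a proper closed subset.

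\textbf{A test case.} Your reasoning never uses $\tau$-regularity, yet the conclusion fails without it. Take $A=k[x]/(x^2)$ and $Z=\rep(A,1)=\{S\}$. Then $P_0=P_1=A$, and the minimal presentations form $kx\setminus\{0\}$, which is not open in $H\cong A$. A general $p$ is an isomorphism with zero cokernel. Here $c(Z)=0<1=h(Z)$.

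\textbf{The missing idea.} What you need is an orbit-codimension count, and this is exactly where $c(Z)=h(Z)$ enters; you relegated it to ``dimension bookkeeping''.
\begin{enumerate}
\item For a minimal presentation $p$, the differential of the $\operatorname{Aut}(P_1)\times\operatorname{Aut}(P_0)$-orbit map at the identity is $\psi_p(U,V)=pU-Vp$.
\item Its cokernel is $\Hom_{K^b(\proj A)}(P_\bullet,P_\bullet[1])\cong\coker\bigl(\Hom_A(P_0,X_p)\to\Hom_A(P_1,X_p)\bigr)$, which has dimension $\dim\Hom_A(X_p,\tau X_p)$ (cf.\ \cite[Proposition~3]{EJR}, used in the paper's example). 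So the orbit of $p$ has codimension $\dim\Hom_A(X_p,\tau X_p)$ in $H$.
\item Two minimal presentations lie in the same orbit if and only if their cokernels are isomorphic.
\item The isoclasses in a suitable dense open subset of $Z$ form a $c(Z)$-dimensional family.
\end{enumerate}
Hence the constructible set of minimal presentations of general points of $Z$ has codimension $h(Z)-c(Z)$ in $H$. It therefore contains a nonempty open subset of $H$ precisely when $Z$ is $\tau$-regular. Only after this can your closure argument and your third step be run. A posteriori it also follows that $P_0$ and $P_1$ have no common summand, so every $p\in H$ is radical.
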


We refer to \cite{BS} for more details on the above proposition, the new terminology proposed by the authors and adopted in our work, and proof of a more refined version of the statement.

\medskip

Following the notation of Proposition \ref{prop:presentations}, let $H:=\Hom_A(P_1,P_0)$, and for any $p\in H$ and $B \in \modu A$, consider
$p_B:\Hom_A(P_0,B)\to \Hom_A(P_1,B)$, given by $p_B(q)=qp$. 
That being the case, from a classical result due to Auslander and Reiten \cite{AR}, when $p$ is a minimal projective presentation, we have an exact sequence
\[
 0\to \Hom_A(X_p,B)
 \to \Hom_A(P_0,B)
 \xrightarrow{p_B}\Hom_A(P_1,B) \to \D\Hom_A(B,\tau X_p) \to 0.\]
Using this, for
$\sigma:=[P_0]-[P_1]$ we obtain the weight formula
\begin{equation}
\label{eq:weightformula}
 \sigma(M)=\dim\Hom_A(X_p,M)-\dim\Hom_A(M,\tau X_p)
\end{equation}
for every module $M$ and every $p \in U$. 

\medskip

The following lemma will be important in the following.

\begin{lemma}
\label{lem:incidence}
Let $P_1,P_0$ be projective modules, $H=\Hom_A(P_1,P_0)$, and $B$ be a nonzero module. With the same notation as before, define
\[
 \calD_B=\left\{p\in H\ \middle|\
 \begin{gathered}
 \text{there exists an epi }X_p\twoheadrightarrow B,\;\;
 \Hom_A(B,\tau X_p)\ne 0
 \end{gathered}
 \right\}.
\]
Then, there exists a proper Zariski-closed subset $F_B\subsetneq H$ that contains $\calD_B$.
\end{lemma}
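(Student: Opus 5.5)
The plan is to turn both conditions defining $\calD_B$ into algebraic conditions on $p$, and then show by a dimension count that they cannot hold on a dense subset of $H$. By Proposition~\ref{prop:presentations}, $U\subseteq H$ is a nonempty open set of minimal presentations. So $H\setminus U$ is a proper closed subset of the irreducible affine space $H$, and it suffices to produce a proper closed set containing $\calD_B\cap U$. We then take $F_B$ to be the union of that set with $H\setminus U$; this union is still proper because $H$ is irreducible.

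\emph{Step 1: algebraic reformulation.} An epimorphism $X_p\twoheadrightarrow B$ is the same as a surjection $q\colon P_0\to B$ with $qp=0$, i.e.\ an element $q\in\ker p_B$ lying in the open subset $\Epi(P_0,B)\subseteq\Hom_A(P_0,B)$. For $p\in U$ the Auslander--Reiten exact sequence recalled above gives
\[
\dim\Hom_A(B,\tau X_p)=\dim\Hom_A(P_1,B)-\operatorname{rk}p_B ,
\]
so $\Hom_A(B,\tau X_p)\neq 0$ exactly when $p_B$ is not surjective. This is a closed condition on $p$, since it says certain minors of a matrix depending linearly on $p$ vanish. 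Now consider the incidence variety
\[
\calI=\{(p,q)\in H\times\Epi(P_0,B)\mid qp=0,\ \operatorname{rk}p_B<\dim\Hom_A(P_1,B)\}.
\]
It is locally closed, and its projection $E$ to $H$ is constructible with $E\cap U=\calD_B\cap U$. It therefore suffices to show that $E$ is not dense in $H$; then $\overline{E}$ is a proper closed subset.

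\emph{Step 2: dimension count.} Suppose, for contradiction, that $E$ is dense. Then $E$ contains a dense open subset, which we may shrink into $U$, and the projection $\pi\colon\calI\to H$ is dominant.

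First bound $\dim\calI$ using the other projection. For fixed $q\in\Epi(P_0,B)$ with kernel $K_q$, the fibre is contained in $\{p\mid \im p\subseteq K_q\}=\Hom_A(P_1,K_q)$. Since $P_1$ is projective and $q$ is surjective, this has dimension $\dim H-\dim\Hom_A(P_1,B)$. Hence
\[
\dim\calI\le \dim H-\dim\Hom_A(P_1,B)+\dim\Hom_A(P_0,B).
\]
Next, the fibre of $\pi$ over $p$ is the nonempty open subset $\ker p_B\cap\Epi(P_0,B)$ of $\ker p_B\cong\Hom_A(X_p,B)$. By dominance and upper semicontinuity of fibre dimension, for generic $p\in U\cap E$ we get
\[
\dim\calI\ge\dim H+\dim\Hom_A(X_p,B).
\]
Combining the two bounds gives $\dim\Hom_A(X_p,B)\le\dim\Hom_A(P_0,B)-\dim\Hom_A(P_1,B)$. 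By exactness of the four-term sequence, this says $\dim\Hom_A(B,\tau X_p)\le 0$. That contradicts $p\in E\cap U=\calD_B\cap U$. Hence $E$ is not dense, and $F_B:=\overline{E}\cup(H\setminus U)$ works.

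\emph{Main obstacle.} The delicate point is the dimension bookkeeping in Step 2. One must apply the fibre-dimension theorem to a possibly reducible $\calI$, and the lower bound needs an irreducible component of $\calI$ that dominates $H$. I would take a dominating component, apply the generic fibre-dimension equality to it, and note that its generic fibre is open in $\ker p_B$, since $\ker p_B$ is irreducible and the epimorphism condition is open. That gives the required equality on a dense open set of $p$. The trivial cases, where $\Epi(P_0,B)$ is empty or $\calI$ is empty, give $E=\emptyset$ and need no argument.
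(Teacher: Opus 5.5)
Your dimension count is the paper's: the same incidence variety of pairs $(p,q)$ with $q\in\Epi(P_0,B)$ and $qp=0$, and the same bound $\dim H-\dim\Hom_A(P_1,B)+\dim\Hom_A(P_0,B)$ coming from projectivity of $P_1$. You also use the same fact that the fibre over $p$ is open in $\ker p_B\cong\Hom_A(X_p,B)$, which forces $\rk p_B=\dim\Hom_A(P_1,B)$ generically. The one structural difference is where the rank condition goes. You build $\rk p_B<\dim\Hom_A(P_1,B)$ into $\calI$ and need only an upper bound on $\dim\calI$. The paper keeps the incidence variety without the rank condition and proves it is irreducible of dimension exactly $\dim H+\dim\Hom_A(P_0,B)-\dim\Hom_A(P_1,B)$. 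It then splits into cases according to whether the rank-deficiency locus is proper and whether $\pi$ is dominant. Your variant avoids the irreducibility argument.

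There is a scope problem with your use of $U$. The lemma is stated for arbitrary projectives $P_1,P_0$, but Proposition~\ref{prop:presentations} only supplies a dense open set of minimal presentations for the pair attached to a $\tau$-regular component. For a general pair no such $U$ exists. For example, if $P_1$ and $P_0$ share an indecomposable summand, every minimal presentation lies in the proper linear subspace $\{p\mid \im p\subseteq\operatorname{rad}P_0\}$, and then your $F_B=\overline{E}\cup(H\setminus U)$ is all of $H$.

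This does not affect Theorem~\ref{Thm: from tau-regular comp. to brick comp}, where only $\calD_B\cap U$ matters. It is also easy to fix, because $U$ plays no essential role:
\begin{enumerate}
\item At the end of Step~2 you already have $\dim\Hom_A(P_0,B)-\rk p_B=\dim\ker p_B\le\dim\Hom_A(P_0,B)-\dim\Hom_A(P_1,B)$. This contradicts the rank condition built into $\calI$ directly, without the four-term sequence.
\item The inclusion $\calD_B\subseteq E$ holds for every $p\in H$. Up to automorphisms of $P_1$ and $P_0$, $p$ is a minimal presentation of $X_p$ plus a summand $Q\xrightarrow{1}Q$ plus a summand $R\to 0$. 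Hence $\coker p_B\cong\D\Hom_A(B,\tau X_p)\oplus\Hom_A(R,B)$, and $\Hom_A(B,\tau X_p)\neq 0$ forces $p_B$ to be non-surjective.
\end{enumerate}
So $F_B:=\overline{E}$ works in general. The paper's own proof uses the implication in (ii) tacitly for non-minimal $p$.

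Finally, the justification in your last paragraph is off. A dominating component's fibre is only a closed subset of $\pi^{-1}(p)$, so being open in $\ker p_B$ does not follow from what you say. You do not need it. Apply the fibre-dimension theorem to each irreducible component of $\calI$; non-dominant components have empty fibres over a dense open set. This gives a dense open $V\subseteq H$ with $\dim\pi^{-1}(p)\le\dim\calI-\dim H$ for all $p\in V$. The set $V$ meets the dense set $E$, and for $p\in E$ the fibre $\pi^{-1}(p)$ is a nonempty open subset of the irreducible space $\ker p_B$, so it has dimension $\dim\ker p_B$. That is exactly the inequality Step~2 uses.
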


\begin{proof}
Let $N=\dim H$, and set $a:=\dim\Hom_A(P_0,B)$, and $b:=\dim\Hom_A(P_1,B)$.

First, we note that the set of all epimorphisms $f:P_0\to B$ in $\modu A$, denoted by $\Epi_A(P_0,B)$, forms an open subset of $\Hom_A(P_0,B)$, because surjectivity defines maximal-rank conditions on the underlying linear map. 
If this set is empty, then $\calD_B$ is empty and we may take $F_B=\varnothing$.

Henceforth, we assume that $\Epi_A(P_0,B)$ is nonempty. That being the case, consider the variety
\[
 \calI_B=\{(p,q)\in H\times\Epi_A(P_0,B) \,|\, qp=0\}.
\]
For each $q\in\Epi_A(P_0,B)$, consider the linear map $\phi_q: H\longrightarrow\Hom_A(P_1,B)$ given by $\phi_q(p):=qp$. Note that $\phi_q$ is surjective, because $P_1$ is projective. Moreover, observe that the dimension of ${\rm ker}(\phi_q)$ is constant for $q\in \Epi_A(P_0,B)$ and equals $N-b$. Therefore, we can consider $\calI_B$ as a vector bundle of rank $N-b$ over
the irreducible variety $\Epi_A(P_0,B)$, which has dimension $a$.
 Now, we claim that $\calI_B$ is irreducible. To show this, assume otherwise and let $U_1, U_2$ be two nonempty open sets in $\calI_B$ that do not intersect. Since $k$ is algebraically closed, the projection ${\rm pr}:\calI_B \to \Epi_A(P_0,B)$ is an open map, and we get two nonempty open sets ${\rm pr}(U_1)$ and ${\rm pr}(U_2)$. Since $\Epi_A(P_0,B)$ is an irreducible variety, ${\rm pr}(U_1) \cap {\rm pr}(U_2) \neq \emptyset$, and we can take $q$ a point in this intersection. The fiber ${\rm pr}^{-1}(q)$ is the affine space ${\rm ker}(\phi_q)$, hence irreducible. However, both ${\rm pr}^{-1}(q)\cap U_1$ and ${\rm pr}^{-1}(q)\cap U_2$ are non-empty open sets of this affine space and they do not intersect, a contradiction. This proves the irreducibility of $\calI_B$.
Moreover, because $\calI_B$ is a vector bundle of rank $N-b$ over $\Epi_A(P_0,B)$, and $\dim \Epi_A(P_0,B)=a$, we have $ \dim\calI_B=N+a-b$.

\medskip
Now, consider $\pi:\calI_B\to H$ the canonical projection onto the first factor. From the definitions, it follows that $\mathrm{Im}(\pi)$ consists of those $p:P_1\to P_0$ for which $X_p=\coker p$ admits an epimorphism onto $B$. 
Moreover, the set $\calD_B$ is a subset of the set of points $p$ in $\mathrm{Im}(\pi)$ for which the rank of $p_B$ is strictly less than $b$. We note that the condition ``the rank of $p_B$ is strictly less than $b$'' forms a closed subset $S$ of $H$. If this subset is proper, then we set $F_B = \overline{\pi(\calI_B)} \cap S$ and we are done. Hence, we may assume that a point $p$ in general position in $H$ is such that $p_B$ has rank less than $b$.
\begin{enumerate}
    \item If $\mathrm{Im}(\pi)$ is not dense in $H$, then we take
\[
 F_B=\overline{\pi(\calI_B)}\subsetneq H,
\]
which is a proper closed subset containing $\calD_B$, hence we are done.
    \item We now assume $\mathrm{Im}(\pi)$ is dense in $H$. 

Observe that $\pi:\calI_B\to H$ satisfies the assumptions of \cite[Theorem 3, I.8]{Mu}, where we also know that $\calI_B$ is irreducible with $\dim\calI_B=N+a-b$, and $\dim H=N$. Hence, by the aforementioned theorem, we have the generic equality in the dimension of fibers of $\pi$. More precisely, there exists a nonempty open subset $V\subseteq H$ such that every $p\in V$
has a nonempty fiber of dimension
\begin{equation*}
 \dim\pi^{-1}(p)=a-b.
\end{equation*}
On the other hand, for every $p\in\mathrm{Im}(\pi)$,  the fiber is identified with $\pi^{-1}(p)=\Epi_A(X_p,B)$, which is a nonempty open subset of $\Hom_A(X_p,B)=\ker p_B$.
Therefore, we have
\begin{equation*}
 \dim\pi^{-1}(p)=\dim\ker p_B=a-\rk p_B,
\end{equation*}
which implies $\rk p_B=b$, for general $p\in H$. This contradicts our assumption that a point $p$ in general position in $H$ is such that $p_B$ has rank less than $b$.
\end{enumerate}
\end{proof}

The following lemma is a useful tool for constructing bricks. Although it should be well known, we present a short proof for completeness.

\begin{lemma}
\label{lem:brickimage}
For $M$ and $N$ in $\modu A$, let $f\in\Hom_A(M,N)$ be a nonzero map whose image is of minimal dimension. Then $B=\im f$ is a brick.
\end{lemma}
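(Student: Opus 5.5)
We argue by contradiction, using the minimality of $\dim \im f$ among all nonzero maps $M\to N$. Since $f\neq 0$, the module $B=\im f$ is nonzero. Because $\kk$ is algebraically closed and $\End_A(B)$ is finite-dimensional, it suffices to show that every nonzero endomorphism of $B$ is an automorphism. Then $\End_A(B)$ is a finite-dimensional division algebra over $\kk$, hence equal to $\kk$.

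Factor $f$ as $M\xrightarrow{\pi} B\xrightarrow{\iota} N$, with $\pi$ the canonical epimorphism and $\iota$ the inclusion. Let $g\in\End_A(B)$ be nonzero and consider $f':=\iota\, g\, \pi\in\Hom_A(M,N)$.

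Since $\pi$ is surjective, $\im(g\pi)=\im g$. Since $\iota$ is injective, $\im f'=\iota(\im g)\cong \im g$. In particular $f'\neq 0$, because $g\neq 0$, and $\dim \im f'=\dim \im g\le \dim B=\dim\im f$.

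By the minimality of $\dim\im f$ among nonzero maps $M\to N$, we get $\dim\im g=\dim B$. Thus $g$ is surjective, and since $B$ is finite-dimensional, $g$ is an isomorphism.

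Hence every nonzero element of $\End_A(B)$ is invertible, so $\End_A(B)$ is a division algebra and $B$ is a brick. The only point needing care is the identity $\dim\im(\iota g\pi)=\dim\im g$, which uses exactly that $\pi$ is onto and $\iota$ is mono. Beyond that there is no real obstacle.
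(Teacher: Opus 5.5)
Your proof is correct and is essentially the paper's own argument: factor $f$ through $B=\im f$, compose a nonzero $g\in\End_A(B)$ to get the nonzero map $\iota g\pi\colon M\to N$, and let minimality of $\dim\im f$ force $g$ to be surjective, hence an isomorphism. (The opening phrase ``by contradiction'' is unnecessary, since your argument is direct.)
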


\begin{proof}
For $B=\im f$, consider the factorization of $f$ as $M\xrightarrow{q}B\xrightarrow{\iota}N$, where $q$ is surjective and $\iota$ is injective. For any nonzero
$g\in\End_A(B)$, the composition $\iota g q$ is again nonzero. Hence, by the minimality assumption, we obtain
$\dim B\leq\dim\im(\iota g q)=\dim\im g\leq\dim B$. 
Therefore, $g$ is an isomorphism, which implies the desired result.
\end{proof}

The next result strengthens the preceding construction of bricks via minimal-image in the case where the co-domain is the Auslander--Reiten translate of the domain. We also note that this proposition can be viewed as a stronger version of a similar construction of the stability condition given in \cite[Lemma 2.5]{CKW} for the homogeneous bricks, which was based on some earlier results of Domokos \cite{Do}. Before we state the following proposition, recall that, for each algebra $A$ and every $M$ in $\modu A$, the weight
$\ell:=[A]$ satisfies $\ell(M)=\dim_k M$.

\begin{proposition}\label{prop:stableimage}
Let $X \in \modu A$ with $\Hom_A(X,\tau X)\ne0$, with a minimal projective presentation $P_1\to P_0\to X\to0$. Take a nonzero map $f:X\to\tau X$ with $\dim\im f$ minimal, and put $B=\im f$. 
If $\sigma:=[P_0]-[P_1]$ and $\ell:=[A]$, then brick $B$ is stable for the integral weight
\[
 \vartheta_B=(\dim B)\sigma-\sigma(B)\ell.
\]
\end{proposition}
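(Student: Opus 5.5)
The plan is to verify $\vartheta_B(B)=0$ and then translate stability into a slope inequality, which I expect to follow from the minimality of $\dim\im f$ together with the weight formula \eqref{eq:weightformula}. That formula, $\sigma(M)=\dim\Hom_A(X,M)-\dim\Hom_A(M,\tau X)$, comes from the Auslander--Reiten exact sequence recalled above. It holds for our $X$, because the given presentation is minimal.

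First, $\vartheta_B(B)=(\dim B)\sigma(B)-\sigma(B)\dim B=0$. Since $\ell(M)=\dim M$, for a nonzero proper submodule $L\subsetneq B$ we have $\vartheta_B(L)=(\dim B)\sigma(L)-\sigma(B)\dim L$. Put $Q=B/L\neq 0$. Using additivity $\sigma(B)=\sigma(L)+\sigma(Q)$ and $\dim B=\dim L+\dim Q$, a short rearrangement gives
\[
\vartheta_B(L)=(\dim Q)\,\sigma(L)-(\dim L)\,\sigma(Q).
\]
So it will suffice to show $\sigma(L)<0$ and $\sigma(Q)>0$.

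The key step uses the minimality of $\dim\im f$ among nonzero maps $X\to\tau X$ three times.
\begin{enumerate}
\item $\Hom_A(X,L)=0$. A nonzero $g:X\to L$, composed with $L\subseteq B\subseteq\tau X$, would give a nonzero map $X\to\tau X$ with image of dimension at most $\dim L<\dim B$.
\item $\Hom_A(Q,\tau X)=0$. A nonzero $h:Q\to\tau X$, precomposed with the surjection $X\twoheadrightarrow B\twoheadrightarrow Q$, would give a nonzero map $X\to\tau X$ with image of dimension at most $\dim Q<\dim B$.
\item $\Hom_A(L,\tau X)\neq 0$, because it contains the inclusion $L\hookrightarrow B\hookrightarrow\tau X$. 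Likewise $\Hom_A(X,Q)\neq0$, because it contains the surjection $X\twoheadrightarrow Q$.
\end{enumerate}
By \eqref{eq:weightformula}, these give $\sigma(L)=-\dim\Hom_A(L,\tau X)<0$ and $\sigma(Q)=\dim\Hom_A(X,Q)>0$. Hence $\vartheta_B(L)<0$, which is stability. That $B$ is a brick is already Lemma~\ref{lem:brickimage}, and it is also a consequence of stability.

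The main obstacle I anticipated was the sign of $\sigma(B)$. A direct attack comparing $\sigma(L)/\dim L$ with $\sigma(B)/\dim B$ only gives $\sigma(L)<0$, which is not enough when $\sigma(B)<0$. The fix is to pass to the quotient $Q$ and prove the vanishing $\Hom_A(Q,\tau X)=0$ in item (ii). This is the one place where minimality is applied to a map that factors through the quotient rather than through the submodule. Once that vanishing is in hand, the rest is bookkeeping.
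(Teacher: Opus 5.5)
Your proposal is correct and follows essentially the same route as the paper. Like the paper, you use minimality of $\dim\im f$ to get $\Hom_A(X,L)=0=\Hom_A(Q,\tau X)$, apply the weight formula to obtain $\sigma(L)<0<\sigma(Q)$, and conclude from the identity $\vartheta_B(L)=(\dim Q)\sigma(L)-(\dim L)\sigma(Q)$.
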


\begin{proof}
Consider the factorization of $f$ as $X\xrightarrow{q}B\xrightarrow{\iota}\tau X$, where $q$ is surjective and $\iota$ is injective. Moreover, let $ 0\to L\to B\to Q\to 0$ be an exact sequence with $L,Q$ nonzero. First, we claim that $\Hom_A(X,L)=0$ and $\Hom_A(Q,\tau X)=0$.

Observe that a nonzero map $X\to L$, followed by $L\hookrightarrow B\hookrightarrow\tau X$, would give a nonzero map $X\to\tau X$ with image of dimension at most $\dim L<\dim B$.
Similarly, a nonzero map $Q\to\tau X$, preceded by $X\twoheadrightarrow B\twoheadrightarrow Q$, would have image of dimension at most $\dim Q<\dim B$. Both cases contradict minimality, and this verifies the above claim.

Evidently, the inclusion $L\hookrightarrow B\hookrightarrow\tau X$, as well as the surjection $X\twoheadrightarrow B\twoheadrightarrow Q$, are nonzero.
Thus, the weight formula \eqref{eq:weightformula}, together with $\Hom_A(X,L)=0=\Hom_A(Q,\tau X)$, implies that
\[\sigma(L)=-\dim\Hom_A(L,\tau X)<0, \qquad \text{and}
 \qquad \sigma(Q)=\dim\Hom_A(X,Q)>0.\]

Finally, to verify the stability conditions, first note that $\vartheta_B(B)=0$ follows from the definition. 
Moreover, from the additivity of $\sigma$, together with the additivity of dimension, we have
\begin{align*}
 \vartheta_B(L)
 &=(\dim B)\sigma(L)-\sigma(B)\dim L\\
 &=(\dim Q)\sigma(L) + (\dim L)\sigma(L)-\sigma(L)\dim L-\sigma(Q)\dim L\\
 &=(\dim Q)\sigma(L)-(\dim L)\sigma(Q)<0.
\end{align*}
This shows that $B$ is $\vartheta_B$-stable and finishes the proof.
\end{proof}

\section{Proof of the main results and consequences}
\label{Sec:proofs}

We begin this section by showing an important result that we remarked on in Section \ref{Sec: Introduction}. First, recall that if $Z \in \Irr(A,\dd)$, then there exist finitely generated projective modules $P_0, P_1$ with no common nonzero direct summand, such that an element in general position in $Z$ has a minimal projective presentation of shape $P_1 \to P_0$; see Proposition \ref{prop:presentations}. That being the case, we let $\sigma:=[P_0]-[P_1]$ and use this notation in the following.

\begin{theorem}\label{Thm: from tau-regular comp. to brick comp}
Let $A$ be an algebra and $Z \in \Irr(A,\dd)$ be a $\tau$-regular component with $c(Z)>0$. 
There exists a dimension vector $\bb\leq\dd$, and a brick component $W\subseteq\rep(A,\bb)$ with $c(W)>0$, such that the general points $B$ of $W$ are stable for $ \vartheta=(\dim B)\sigma-\sigma(B)\ell$.
\end{theorem}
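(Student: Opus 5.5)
We plan to combine Proposition~\ref{prop:presentations}, Lemma~\ref{lem:incidence} and Proposition~\ref{prop:stableimage}. Fix the open set $U\subseteq H=\Hom_A(P_1,P_0)$ from Proposition~\ref{prop:presentations}. For each $p\in U$ we have $\dim\Hom_A(X_p,\tau X_p)=c(Z)>0$. We choose a nonzero map $f_p:X_p\to\tau X_p$ with image of minimal dimension and put $B_p=\im f_p$. By Proposition~\ref{prop:stableimage}, $B_p$ is a brick that is stable for $\vartheta_{B_p}=(\dim B_p)\sigma-\sigma(B_p)\ell$. Since $B_p$ is a quotient of $X_p$, its dimension vector satisfies $\underline{\dim}B_p\leq\dd$.

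Next we make the dimension vector constant. There are only finitely many dimension vectors $\bb\leq\dd$, and $U$ is irreducible. Hence, after shrinking $U$ to a dense constructible (then open) subset, we may assume $\underline{\dim}B_p=\bb$ for all $p$ there. Concretely, the sets $\{p: \text{some nonzero } X_p\to\tau X_p \text{ has image of dimension vector } \bb\}$ are constructible, and the minimal-dimension condition picks out one of them generically. This makes $\vartheta$ a fixed weight. The $\vartheta$-stable locus in $\rep(A,\bb)$ is open, so the modules $B_p$ lie in some irreducible component $W$. We would take $W$ to be a component containing the closure of the image of a suitable irreducible family $p\mapsto B_p$, chosen to meet the stable locus. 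Then the general points of $W$ are $\vartheta$-stable, hence bricks, and $W$ is a brick component.

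The main obstacle is proving $c(W)>0$, that is, that infinitely many non-isomorphic $B_p$ occur, or at least that $W$ has no dense orbit. Suppose instead that $W$ had a dense orbit of some brick $B$. Then, for $p$ in a dense subset, $B_p\cong B$ would be one fixed module. In that case $X_p\twoheadrightarrow B$ is an epimorphism and $B\hookrightarrow\tau X_p$ is nonzero, so $\Hom_A(B,\tau X_p)\neq0$. Hence $p\in\calD_B$ for a dense set of $p$. This contradicts Lemma~\ref{lem:incidence}, which places $\calD_B$ inside a proper closed subset $F_B$.

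The same argument applies to any finite set of bricks, since a finite union of the sets $F_B$ is still proper closed. So the family $\{B_p\}$ contains infinitely many isomorphism classes of $\vartheta$-stable bricks of dimension vector $\bb$. This family must sweep out a subvariety that is not a finite union of orbits. To pass to a single component, we argue as follows. Each $\vartheta$-stable $B_p$ lies in the closure of the stable locus of one of finitely many components of $\rep(A,\bb)$. By pigeonhole, infinitely many of these isomorphism classes lie in one component $W$ whose generic point is $\vartheta$-stable. A component containing infinitely many isomorphism classes of stable points cannot have a dense orbit, because the stable orbits are closed in the semistable locus and a dense orbit would be unique. Hence $c(W)>0$.

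The delicate points are the generic constancy of $\bb$ and making sure the chosen $W$ has stable general points. We would handle both with the constructibility of the relevant loci and Chevalley's theorem.
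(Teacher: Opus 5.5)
Your proposal is correct and follows essentially the paper's proof. The shared ingredients are: minimal-image bricks $B_p$; Lemma~\ref{lem:incidence} to force infinitely many isomorphism classes, since a nonempty open $U$ cannot lie in a finite union of the proper closed sets $F_{B_i}$; Proposition~\ref{prop:stableimage} for $\vartheta$-stability; and a pigeonhole over the finitely many components of $\rep(A,\bb)$. Your closed-orbit argument also spells out the $c(W)>0$ step, which the paper leaves implicit.

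Your one deviation, fixing $\bb$ generically on $U$ via Chevalley before applying the lemma, is an unnecessary detour. The paper simply pigeonholes the infinite set of isomorphism classes over the finitely many $\bb\le\dd$. The heuristic ``irreducible family $p\mapsto B_p$'' is not a morphism and should be dropped in favor of the finite-union argument you also give.
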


\begin{proof}
Let $Z\subseteq\rep(A,\dd)$ be a $\tau$-regular component
with $c(Z)>0$, and consider the total dimension $r=\sum_i d_i$. Moreover, consider the projective modules $P_1,P_0$ above and a nonempty open subset
$U\subseteq H=\Hom_A(P_1,P_0)$, as in
Proposition~\ref{prop:presentations}.

For every $p\in U$, we have $\Hom_A(X_p,\tau X_p)\ne0$. 
Choose a nonzero map $f_p: X_p\to\tau X_p$ whose image has minimal dimension, and put $B_p=\im f_p$.
By Lemma~\ref{lem:brickimage}, $B_p$ is a brick of dimension at most $r$.
Moreover, the image factorization gives $X_p\twoheadrightarrow B_p\hookrightarrow\tau X_p$, and the second map is nonzero, so $p\in{\calD}_{B_p}$, where we recall that $$\calD_{B_p}=\{s\in H\ \,|\,
 \text{there is an epi }X_s\twoheadrightarrow B_p,\;\; \Hom(B_p,\tau X_s)\ne 0\}.$$

We first show that $T:=\{B_p \,|\, p\in U\}$ is an infinite set. For the sake of contradiction, assume otherwise and let $T=\{B_1,\ldots,B_t\}$, for a positive integer $t$.
The definition of $\calD_B$ depends only on the isomorphism class of $B$. Therefore, Lemma~\ref{lem:incidence} gives
\begin{equation*}
 U\subseteq\bigcup_{i=1}^t\calD_{B_i}
 \subseteq\bigcup_{i=1}^t F_{B_i},
\end{equation*}
where each $F_{B_i}\subsetneq H$ is a proper closed set.
This implies that the non-empty open set $U$ is contained in a proper closed set, which is the desired contradiction, because $H$ is irreducible.

Because $B_p$ is a quotient of $X_p$, every  $\underline{\dim}B_p$ is componentwise bounded by $\dd$, and since $T=\{B_p \,|\, p\in U\}$ is an infinite set, some $\bb\leq \dd$ is attained for infinitely many of $\underline{\dim}B_p$.

For $\sigma=[P_0]-[P_1]$ above, let
$\vartheta=(\dim B)\sigma-\sigma(B)\ell$.
By Proposition~\ref{prop:stableimage},
every brick $B_p$ of dimension vector $\bb$ is stable for this weight. Since $\rep(A,\bb)$ has only finitely many
irreducible components, one of them, say $W$, contains infinitely many of these isomorphism classes. Its $\vartheta$-stable locus is
nonempty and open, hence dense.
\end{proof}

\begin{proof}[Proof of Theorem A]
Provided that $A$ is $E$-infinite, the definition yields a $\tau$-regular component $Z$ with $c(Z)>0$. Then, from Theorem \ref{Thm: from tau-regular comp. to brick comp} we get a brick component $W$ which yields the required stable family and shows that $A$ is brick-continuous.
\end{proof}

We are now ready to prove Theorem B.

\begin{proof}[Proof of Theorem B]
Assume that $A$ is not $E$-finite and let $\theta$ denote a non-rigid $g$-vector. It follows from Theorem A that one can find a brick component $W$ and an integral weight $\vartheta$, which depends on $\theta$, such that $W$ has infinitely many $G_{\bf b}$-orbits of $\vartheta$-stable points, where ${\bf b}$ denotes the dimension vector of the bricks in $W$. In particular, $\mathcal{M}(A,W)^{\vartheta-ss}$ is positive dimensional. Thus, $A$ is not stably-discrete.

Conversely, assume that $A$ is not stably-discrete. Then there exist a dimension vector ${\bf d}$, an irreducible component $Z$ in $\Irr(A,{\bf d})$ and an integral weight $\vartheta$ such that $\mathcal{M}(A,Z)^{\vartheta-ss}$ is positive dimensional. The points in $\mathcal{M}(A,Z)^{\vartheta-ss}$ are in bijection with the $\vartheta$-polystable points. Since $\mathcal{M}(A,Z)^{\vartheta-ss}$ is positive dimensional, it implies that there exists ${\bf b} \le {\bf d}$ and $W \in \Irr(A,{\bf b})$ such that $\mathcal{M}(A,W)^{\vartheta-ss}$ is positive dimensional and contains infinitely many $\vartheta$-stable points. In particular, $W$ is a brick component with $c(W)>0$. %In particular, there is a brick component $W$ in $\Irr(A,{\bf b})$ which contains infinitely many $\vartheta$-stable $G_{\bf b}$-orbits of bricks. 
We claim that $\vartheta$ is a non-rigid $g$-vector, which would imply that $A$ is $E$-infinite. If $\vartheta$ is rigid, then it is known (see \cite{As2}) that the numerical torsion class $\overline{\mathcal{T}}_\vartheta$ associated to $\vartheta$ is functorially finite. It follows from \cite{As1} that the wide subcategory $\mathcal{W}_\vartheta$ of $\vartheta$-semistable modules is left-finite. In particular, it has at most $({\rk A})$-many non-isomorphic $\vartheta$-stable modules, which is the desired contradiction.
\end{proof}

We observe that Corollaries C, D, E, and F follow from the observations and remarks that we made around each one in Section \ref{Sec: Introduction}, and the references therein. Hence, we omit their proofs.

\section{Example}
To complement our observations on the reduction to the 2nd bBT Conjecture from Section~\ref{Sec: Introduction}, with regard to Corollary~F and Remark following that, here we present an example of an algebra $A$ that is minimal brick-infinite and $g$-tame, but we observe that $A$ does not admit the $\tau$-convergence property. This algebra has also appeared in \cite{Wa}, in the study of minimal brick-infinite algebras of rank $2$. 
More precisely, the $\tau$-tilting fan of $A$ (also called the $g$-vector fan) is only missing the ray of $(2,-1)$, and the quadratic form on the $g$-vectors, given by the Cartan matrix of $A$, does not vanish at $(2,-1)$; for details, see \cite{MP1}. 
Although $A$ satisfies conditions (i) and (ii) of Corollary~F, it does not have the $\tau$-convergence property. We note, however, that $A$ admits infinitely many unfaithful bricks, namely, it does not satisfy condition (iii) of Corollary F.

\begin{example}\label{Ex: non-tau-convergence}
Take the algebra $A=kQ/I$, given by the following quiver $Q$
$$\xymatrix{1 \ar[r]^\gamma & 2  \ar@(ul,ur)^\alpha  \ar@(dl,dr)_\beta} $$
and the admissible ideal $I:=\langle \alpha^2, \alpha\beta, \beta\alpha, \beta^2\rangle$, generated by quadratic relations. It follows from \cite{Wa} that $A$ is minimal brick-infinite, and a simple argument shows that $A$ is a non-distributive wild algebra; see \cite{Mo1,Mo2} and references therein. Moreover, infinitely many bricks of $A$ of dimension vector $(1,2)$ are unfaithful. To check $g$-tameness of $A$, it is sufficient to consider the $g$-vectors of shape $(a,-b)$ with $a, b$ positive, and prove that these are all rigid, except when $a = 2b$. 

If $a \le b$, then a generic cokernel $P_2^b \to P_1^a$ is a direct sum of copies of $S_1$. Therefore, such $g$-vectors are rigid. 
Assume now that $a > b$. Let $R := e_2Ae_2$, which is a $3$-dimensional local algebra. We can think of a morphism $P_2^b \to P_1^a$ as being given by an $a \times b$ matrix with entries in $\gamma R$. Recall that ${\rm Aut}(P_2^b) \times {\rm Aut}(P_1^a)$ acts on the presentation space $\Hom(P_2^b,P_1^a)$ by the rule $(U,V)\cdot h = VhU^{-1}$. By acting via ${\rm Aut}(P_2^b)$, if $f$ is in general position in $\Hom(P_2^b,P_1^a)$, then $f$ can be reduced to matrix form $g:=[\gamma I_b, M]^T$ where $M$ is an $(a-b)\times b$ matrix with entries in $\gamma R$. By further acting via ${\rm GL}_a \subset {\rm Aut}(P_1^a)$, we can further assume that $M$ has entries in $\gamma {\rm rad}R$. In other words, entries of $M$ become linear combinations of $\gamma\alpha, \gamma\beta$. Now, we consider the map $$\psi_g: {\rm End}(P_2^b) \times {\rm End}(P_1^a) \to \Hom(P_2^b, P_1^a)$$ given by $(U,V)\mapsto gU-Vg$. It follows from \cite[Proposition 3]{EJR} that the cokernel of this map is zero precisely when $g$ represents a $2$-term pre-silting complex. Hence, our task is to prove that if $a \ne 2b$, this map is surjective.
If we start with an arbitrary element in $\Hom(P_2^b,P_1^a)$ represented by the block matrix $[X, Y]^T$, where $X$ is $b \times b$ and $Y$ is $(a-b)\times b$, then write $Y=Y_0+Y_1$ where $Y_1$ has entries in $\gamma{\rm rad}R$ and $Y_0$ has constant entries. We can simply use
$$(U,V) = \left(X', \left(\begin{array}{cc}
     0 & 0 \\
     -Y_0& 0\\ 
\end{array}\right)\right), \quad X=\gamma X'$$
which has image $[X, Y_0+MX]^T$ under $\psi_g$, where we remark that $MX$ has entries in $\gamma{\rm rad}R$. Hence, in order to study the image of $\psi_g$, it suffices to consider an element of the form $[0, N]^T$ where $N$ has entries in $\gamma{\rm rad}R$. Writing $V$ using blocks, we get that $[0, N]^T$ lies in the image of $\psi_g$ if and only if there exists $(U,V)$ with
\begin{equation}\label{eqn:block matrix}
    [U-V_{11}-V_{12}M, MU - V_{21}-V_{22}M] = [0, N].
\end{equation}
Writing $U$ as $U=U_0+U_1$ where $U_1$ has entries in $\gamma{\rm rad}R$ and $U_0$ has constant entries, Equation \eqref{eqn:block matrix} yields $U_0 = V_{11}, V_{12}M = U_1, V_{21}=0$ and $MU_0-V_{22}M=N$. Here, we have used that the entries of $M$ are in $\gamma{\rm rad}R$, so $MU_1=0$. Hence, solving Equation \eqref{eqn:block matrix} amounts to deciding whether there exist $V_{11}, V_{22}$ such that $MV_{11}-V_{22}M=N$. Splitting the matrices $M,N$ into the $\gamma\alpha$ and $\gamma\beta$ parts, we get $M = M_{\gamma\alpha} + M_{\gamma\beta}$ and $N = N_{\gamma\alpha}+N_{\gamma\beta}$. The problem can therefore be reduced to surjectivity of
$${\rm Mat}_b(k) \times {\rm Mat}_{a-b}(k) \to {\rm Mat}_{(a-b)\times b}(k) \times {\rm Mat}_{(a-b)\times b}(k)$$ with the rule $(V_{11}, V_{22}) \mapsto (M_{\gamma\alpha}V_{11}-V_{22}M_{\gamma\alpha}, M_{\gamma\beta}V_{11}-V_{22}M_{\gamma\beta})$.
The latter is precisely the analogue of the map $\psi_g$ above, but for the $2$-Kronecker quiver and $g$-vector $(a-b,-b)$. Since $M$ is in general position from the choice of $g$, we know that this map is surjective when $a-b\ne b$, that is, when $a \ne 2b$. We have therefore proved that the $\tau$-tilting fan of $A$ can only miss one ray of $\mathbb{R}^2$, namely the ray of $(2,-1)$. Hence, $A$ is $g$-tame. 

Now, it is easy to check that if $C_A$ is the Cartan matrix of $A$, then $(2,-1)C_A(2,-1)^T=1 \ne 0$, thus $A$ cannot have the $\tau$-convergence property, because $\tau$-converge would imply that $(2,-1)$ is a zero of the Cartan quadratic form.
\end{example}

\vskip 1cm

\noindent\textbf{Acknowledgments.}
KM was supported by Early-Career Scientist JSPS Kakenhi grant number 24K16908. CP was supported by the Natural Sciences and Engineering Research Council of Canada (RGPIN-2026-05988) and by the Canadian Defence Academy Research Programme. The authors thank Sota Asai and Osamu Iyama for some stimulating discussions related to the study of the brick-Brauer-Thrall Conjectures.

\medskip

\noindent {\bf Declaration of AI use}. Artificial intelligence (GPT-$6$ Astra) has been used to fill in some arguments in the proof of Lemma \ref{lem:incidence}, to find the correct shape of the stability condition in Proposition \ref{prop:stableimage}, and also to find Example \ref{Ex: non-tau-convergence}. The authors are responsible for the mathematical content.

\end{document}